\newtheorem{theorem}{Theorem}
\newtheorem{lemma}[theorem]{Lemma}
\newtheorem{corollary}[theorem]{Corollary}
\newtheorem*{mthm*}{Main Theorem}
\newtheorem{problem}[theorem]{Problem}
\newtheorem*{theorema*}{Theorem A}
\newtheorem*{problemb*}{Problem B}
\newtheorem*{problembstar*}{Problem B$^\prime$}  
\newtheorem*{conjecturec*}{Conjecture C}
\newtheorem*{conjecturecstar*}{Conjecture C$^\prime$}
\newtheorem*{theoremd*}{Theorem D}
\newtheorem*{theoreme*}{Theorem E}
\newcommand{\C}{\mathbb C}
\newcommand{\D}{\mathbb D}
\newcommand{\E}{\mathcal E}
\newcommand{\N}{\mathbb N}
\newcommand{\R}{\mathbb R}
\newcommand{\Z}{\mathbb Z}
\newcommand{\Q}{\mathbb Q}
\begin{document}

\title[Deep zero problems and the HRT conjecture]
{Deep zero problems and the\\ HRT conjecture}

\author{Yufei Li}
\address{School of Mathematics and Statistics, Northeast Normal University, 
Changchun, Jilin 130024, China}
\email{liyf495@nenu.edu.cn}

\author{Zeguang Liu}
\address{School of Mathematics and Statistics, Northeast Normal University, 
Changchun, Jilin 130024, China}
\email{liuzeguang205@nenu.edu.cn}

\author{Kehe Zhu}
\address{Department of Mathematics and Statistics, SUNY, Albany, NY 12222, USA}
\email{kzhu@albany.edu}

\subjclass[2020]{30H20, 42C15, 47B38.}

\keywords{Deep zero problems, Fock space, HRT conjecture, 
Bargmann transform, Weyl unitary operators, roots of unity.}

\date{\today}

\thanks{Liu is the corresponding author.}
\thanks{Research is partially supported by the National Natural Science Foundation 
of China, grant numbers 12571136, 12001089, and 12271328.}

\begin{abstract}
We investigate a ``deep zero problem'' proposed by Hedenmalm. We show that
there is a natural connection between Hedenmalm's problem and the classical HRT
conjecture in time-frequency analysis. This connection allows us to show that 
Hedenmalm's problem 5.2 in \cite{Hed2025} as well as some of its natural analogs
have affirmative answers.  
\end{abstract}

\maketitle

\section{Introduction}

Let $\Omega$ be a domain (connected, open, and nonempty set) in the complex
plane $\C$ and let $\N_0=\{0,1,2,3,\cdots\}$ denote the set of all nonnegative
integers. The classical {\it identity theorem} states that if $f$ is analytic on
$\Omega$ and if either $f^{(j)}(a)=0$ for some point $a\in\Omega$ and all
$j\in\N_0$ or $f(a_j)=0$ for a sequence $\{a_j\}$ in $\Omega$
that has an accumulation point in $\Omega$, then $f$ is identically $0$ on
$\Omega$.

There are two natural directions in which we can attempt to extend these identity 
theorems (or uniqueness theorems). The first concerns the condition that 
$f(a_j)=0$ for a sequence $\{a_j\}$ in $\Omega$, where $\{a_j\}$ does not 
necessarily have an accumulation point in $\Omega$. There can be additional 
assumptions on $f$ and $\{a_j\}$ which would imply that $f$ is identically $0$. 
The most prominent example is the following: if $f\in H^\infty$, the space of all 
bounded analytic functions on the open unit disc $\D$ in $\C$, and if 
$\sum(1-|a_j|)=\infty$, then the condition that $f(a_j)=0$ for all $j$ (counting
multiplicity) implies $f=0$. More generally, for any given space $X$ of analytic 
functions on $\Omega$, we can seek conditions on $\{a_j\}$ such that for any 
$f\in X$ the condition $f(a_j)=0$ for all $j$ (counting multiplicity) implies $f=0$. 
This problem of characterizing zero sequences for $X$ has only been solved for 
a very small number of spaces. The problem remains open for many classical 
function spaces, including the Bergman space of $\D$. 

The second direction concerns the condition $f^{(j)}(a)=0$  for all $j\in\N_0$, 
where $a$ is some fixed point in $\Omega$. Again, we can require $f$ 
to belong to a certain analytic function space $X$. The following problem has 
been studied in the literature. Let $a$ and $b$ be two points in 
$\Omega$, let $\E_0\subset\N_0$, and let $\E_1=\N_0\setminus\E_0$. Find 
additional assumptions (on $f$, $\E_0$, $a$, and $b$) such that the condition
$$\begin{cases} f^{(j)}(a)=0, &j\in \E_0,\\ f^{(j)}(b)=0, &j\in \E_1,\end{cases}$$
implies that $f=0$. See \cite{Abel1839, MAE1954, AI1947,JAK1968,JMW1964} 
for example.

More generally, if $d\ge2$ and if
\begin{itemize}
\item[(i)] $T_k$, $0\le k\le d-1$, are bounded linear operators on $X$, 
\item[(ii)] $\E_k$, $0\le k\le d-1$, are disjoint subsets of $\N_0$,
\item[(iii)] $a_k$, $0\le k\le d-1$, are points in $\Omega$,
\end{itemize}
we may ask to find additional assumptions under which the condition
$$\begin{cases}(T_0f)^{(j)}(a_0)=0,&j\in \E_0,\\
(T_1f)^{(j)}(a_1)=0,&j\in \E_1,\\
\vdots &\vdots\\
(T_{d-1}f)^{(j)}(a_{d-1})=0,&j\in \E_{d-1},\end{cases}$$
implies that $f=0$. Following Hedenmalm \cite{Hed2025}, we will call all 
these ``deep zero problems''. 

Of particular interest is the case when
$$\E_0\cup \E_2\cup\cdots\cup \E_{d-1}=\N_0.$$
For example,
$$\E_k=\{k+dj: j\in\N_0\},\quad 0\le k\le d-1,$$
give such a decomposition of $\N_0$. When $d=2$, $\E_0$ is the set of all
even integers in $\N_0$, while $\E_1$ is the set of all odd integers in $\N_0$.
If $\Omega$ is ``sufficiently symmetric'' and the space $X$ is ``sufficiently 
regular'', we may be able to move points around in $\Omega$ 
(translations for example) and so can adjust the operators involved and 
assume that  $a_0=a_1=\cdots=a_{d-1}$.

Hedenmalm's paper \cite{Hed2025}, as well as ours here, focuses on the case 
where $X$ is the Fock space $F^2$ of the complex plane and the operators $T$ 
on $F^2$ are Weyl unitary operators (weighted translation operators). Recall that 
$F^2$ consists of all entire functions $f$ on $\C$ such that
\begin{align*}
\|f\|^2=\int_{\C}|f(z)|^{2}\,\mathrm{d}\mu_{G}(z)<+\infty,
\end{align*}
where $\mathrm{d}\mu_G(z)=e^{-|z|^{2}}\mathrm{d}A(z)$ is the Gaussian
measure on $\C$. Here $\mathrm{d}A(z)=\pi^{-1}\mathrm{d}x\mathrm{d}y$, 
with $z = x + iy$, is the normalized area measure. For any point $\alpha\in\C$, 
the Weyl operator $U_\alpha: F^2\to F^2$ is defined by
$$U_\alpha f(z)=e^{-\frac12|\alpha|^2-\overline\alpha z}f(z+\alpha).$$
It is well known and easy to check that $U_\alpha$ is a unitary operator with
$U_\alpha^{-1}=U_\alpha^*=U_{-\alpha}$. For background information 
on the Fock space and Weyl unitary operators, see \cite{Zhu2012} for example.

One of the results proved in \cite{Hed2025} is the following.

\begin{theorema*}
Suppose $f\in F^2$ and $\E$ is the set of even integers in $\N_0$ 
(or the set of odd integers in $\N_0$). If for some $\beta\in\C$ we 
have $f^{(j)}(0)=0$ for all $j\in\E$ and 
$(U_\beta f)^{(j)}(0)=0$ for all $j\in\E^c$, then $f=0$.
\end{theorema*}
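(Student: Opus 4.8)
The plan is to translate the two vanishing conditions into eigenvalue equations for the parity operator and then collapse them into a single eigenvalue equation for a Weyl operator, which I will show has no nonzero solution. First I would introduce the parity operator $Pf(z)=f(-z)$, a self-adjoint unitary involution on $F^2$. Saying $f^{(j)}(0)=0$ for every even $j$ is the same as saying that the even-order Taylor coefficients of $f$ vanish, i.e. that $f$ is odd, i.e. $Pf=-f$; similarly $f^{(j)}(0)=0$ for every odd $j$ says $Pf=f$. Hence each hypothesis of the theorem becomes a pair of relations $Pf=\varepsilon f$ and $P(U_\beta f)=\varepsilon' U_\beta f$, where $(\varepsilon,\varepsilon')=(-1,+1)$ when $\E$ is the set of even integers and $(\varepsilon,\varepsilon')=(+1,-1)$ when $\E$ is the set of odd integers.

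Next I would record two elementary identities: the intertwining relation $PU_\beta=U_{-\beta}P$, which is a one-line check from the definition of $U_\beta$, and the composition law $U_{-\beta}U_{-\beta}=U_{-2\beta}$, whose phase factor is trivial since $\operatorname{Im}|\beta|^2=0$. Applying $U_{-\beta}=U_\beta^{-1}$ to the relation $P(U_\beta f)=\varepsilon' U_\beta f$ and using $U_{-\beta}PU_\beta=U_{-\beta}U_{-\beta}P=U_{-2\beta}P$ gives $U_{-2\beta}Pf=\varepsilon' f$. Substituting $Pf=\varepsilon f$ and noting that $\varepsilon'/\varepsilon=-1$ in both cases, I obtain the single equation $U_{-2\beta}f=-f$. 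In other words, setting $\gamma=-2\beta$, the function $f$ is an eigenvector of the Weyl operator $U_\gamma$ with eigenvalue $-1$.

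The crux is then to show that $U_\gamma$ has no nonzero eigenvector whenever $\gamma\neq0$, so that $f=0$; the degenerate case $\beta=0$ is trivial, since there the two hypotheses together give $f^{(j)}(0)=0$ for all $j$. Unwinding $U_\gamma f=-f$ yields $f(z+\gamma)=-e^{\frac12|\gamma|^2+\overline\gamma z}f(z)$. I would then observe that the function $\phi(z)=|f(z)|^2 e^{-|z|^2}$ is $\gamma$-periodic: a direct computation shows $\phi(z+\gamma)=\phi(z)$, the exponential factors cancelling exactly. But $\phi$ is continuous, nonnegative, and integrable on $\C$ because $\int_{\C}\phi\,\mathrm{d}A=\|f\|^2<\infty$. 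Decomposing $\C$ into the $\Z\gamma$-translates of a fundamental strip for the map $z\mapsto z+\gamma$ and using periodicity, the integral $\int_{\C}\phi\,\mathrm{d}A$ becomes an infinite sum of equal nonnegative numbers, which can be finite only if each of them is zero. Hence $\phi\equiv0$ and $f=0$.

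I expect this last step to be the main obstacle, and it is worth emphasizing that it is precisely the two-point instance of the HRT phenomenon: the relation $U_\gamma f=-f$ asserts that $\{f,U_\gamma f\}$ is linearly dependent, which is forbidden for $f\neq0$ and $\gamma\neq0$. The periodicity argument for $\phi$ is the most self-contained route to this fact; alternatively one could pass to $L^2(\R)$ through the Bargmann transform, under which $U_\gamma$ becomes a genuine time-frequency shift and the absence of eigenvectors reduces to the classical statement that a nontrivial translation operator on $L^2(\R)$ has purely continuous spectrum.
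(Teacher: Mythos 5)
Your proof is correct, and it diverges from the paper in an instructive way at the final step. Your reduction is in substance the $d=2$ specialization of the paper's general mechanism: your parity operator $P$ is the paper's composition operator $C_\varphi$ with $\omega=e^{2\pi i/2}=-1$, your eigenvalue relations $Pf=\varepsilon f$, $P(U_\beta f)=\varepsilon' U_\beta f$ encode the same information as the paper's projections $P_0,P_1$ (Lemma \ref{l6}), and your intertwining identity $PU_\beta=U_{-\beta}P$ is exactly Lemma \ref{l5}. The paper runs this reduction uniformly in $d$ (Theorem \ref{t6}) to produce the linear dependence $\sum_{m=0}^{d-1}U_{-\beta\omega^{-m}}h=0$ for $h=U_\beta f$, and then \emph{cites} the known HRT result for $N\le 3$ (Theorem D, part (a), from Heil--Ramanathan--Topiwala) as a black box; your collapsed relation $U_{-2\beta}f=-f$ is the same two-point dependence written as an eigenvalue equation, up to a harmless Weyl translate and phase. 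Where you genuinely depart is that you \emph{prove} the needed two-point HRT instance from scratch: the observation that $U_\gamma f=\lambda f$ forces $|f(z)|^2e^{-|z|^2}$ to be $\gamma$-periodic, which is incompatible with integrability unless $f=0$, is the classical argument that Weyl operators have empty point spectrum, and your execution of it (strip decomposition, equal nonnegative summands, continuity) is complete, including the degenerate case $\beta=0$. What each approach buys: the paper's formulation scales to $d=3,4,6$, where the HRT input is no longer elementary (parallel-line and lattice cases of Theorem D), whereas your argument makes the $d=2$ case fully self-contained and elementary, at the cost of being tied to the two-point situation -- your periodicity trick has no direct analogue for a genuine $d$-term dependence among $U_{\lambda_k}h$.
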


Similar results were also proved in \cite{MR2023} for Hardy, Bergman, 
and Dirichlet spaces of the unit disc.

In the same paper \cite{Hed2025}, Hedenmalm also raised the following
question, phrased in terms of the general setup above.

\begin{problemb*} 
Let $f\in F^2$ and let $\E_k=\{k+4j: j\in\N_0\}$ for $0\le k\le 3$. If there
exists a point $\beta\in\C$ such that
\begin{equation}
\begin{cases}
f^{(j)}(0)=0, &j\in\E_0,\\
(U_\beta f)^{(j)}(0)=0, &j\in\E_1,\\
(U_\beta f)^{(j)}(0)=0, &j\in\E_2,\\
(U_\beta f)^{(j)}(0)=0, &j\in\E_3,\end{cases}
\end{equation}
does it follow that $f=0$?
\end{problemb*}

Note that any entire function $f$ on $\C$ has a Taylor expansion
$$f(z)=\sum_{j=0}^\infty\frac{f^{(j)}(0)}{j!}\,z^j.$$
With the sets $\E_k$, $0\le k\le 3$, defined in Problem~B, it is easy 
to see that the condition $f^{(j)}(0)=0$ for all $j\in\E_0$ if and only if
$$f(z)+f(iz)+f(-z)+f(-iz)=0.$$
Similarly, the condition $f^{(j)}(0)=0$ for all $j\in\E_1\cup\E_2\cup\E_3$
if and only if $f(iz)=f(z)$. Thus Problem~B can be restated in terms of
functional equations as follows.

\begin{problembstar*}
Let $f\in F^2$. If there exists some $\beta\in\C$ such that
$$\begin{cases}
U_\beta f(iz)=U_\beta f(z),\\
f(z)+f(iz)+f(-z)+f(-iz)=0,
\end{cases}$$
does it follow that $f=0$?
\end{problembstar*}

The purpose of this note is to show that Problem B has a positive answer. 
In fact, we will prove the following.

\begin{mthm*}
Problem B as well as the corresponding problems for $d=3$ and $d=6$ 
all have affirmative answers.
\end{mthm*}

Our approach is to explore a certain connection between deep zero problems 
for the Fock space and the so-called HRT conjecture in time-frequency analysis. 
Although the HRT conjecture is still open, a known special case will enable us 
to prove the theorem above. It is natural for us to conjecture that the analog
of Problem B for any positive integer $d>1$ has an affirmative answer.

\section{The HRT Conjecture}\label{s2}

The following is a core open problem in time-frequency analysis, which 
is called the {\it Linear Independence Conjecture} or sometimes the 
{\it HRT Conjecture}. See \cite{Heil2007,HRT1996} for example. As usual,
we use $\R$ to denote the real number line.

\begin{conjecturec*}
If $g\in L^2(\R, \mathrm{d}x)\setminus\{0\}$ and $\{(a_k, b_k)\}_{k=1}^N$ 
are distinct points in $\R^2$, then the functions
\begin{align*}  
g_k(x)=e^{2\pi i b_kx}g(x-a_k),\quad 1\le k\le N,
\end{align*}
are linearly independent in $L^2(\R, \mathrm{d}x)$.
\end{conjecturec*}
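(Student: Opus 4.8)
The plan is to transport the linear-independence question from $L^2(\R)$ into the Fock space $F^2$ by means of the Bargmann transform $B$, a unitary of $L^2(\R)$ onto $F^2$ that intertwines time-frequency shifts with Weyl unitaries: for an appropriate $\R$-linear bijection $(a,b)\mapsto\alpha(a,b)$ one has $B\bigl(e^{2\pi i b x}g(x-a)\bigr)=\lambda\,U_{\alpha(a,b)}Bg$ for a unimodular constant $\lambda$. A nontrivial relation $\sum_{k=1}^N c_k g_k=0$ then becomes $\sum_{k=1}^N c_k U_{\alpha_k}F=0$ in $F^2$, where $F=Bg\ne0$ is entire and the $\alpha_k$ are distinct. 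The conjecture is equivalent to the assertion that no such relation holds with $F\ne0$.

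First I would exhaust the symmetries of the configuration. The family of ``good'' point sets is invariant under an overall Weyl translation and under the metaplectic representation of $SL(2,\R)$, whose unitaries conjugate the shift at $(a,b)$ to the shift at $A(a,b)$ for $A\in SL(2,\R)$. Using translation I may assume $\alpha_1=0$, so that $c_1F=-\sum_{k\ge2}c_k U_{\alpha_k}F$; using $SL(2,\R)$ I may further move the remaining points into a normal form. For $N\le3$ this is decisive: the two surviving points can be brought onto the coordinate axes, after which a Taylor-coefficient analysis of $F$ of exactly the quasi-periodic type underlying Theorem~A forces $F=0$.

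Next I would try to induct on $N$. The Weyl unitaries obey the Heisenberg commutation relation $U_\alpha U_\beta=e^{2i\operatorname{Im}(\overline\alpha\beta)}U_\beta U_\alpha$, so the $U_{\alpha_k}$ generate a unitary representation of a discrete Heisenberg-type group exactly when the $\alpha_k$ lie in a common lattice. In that lattice case the generated group is torsion free, its group von Neumann algebra has no zero divisors, and Linnell's theorem delivers the linear independence with no restriction on $F$; this would handle all lattice configurations. The remaining goal would be to reduce an arbitrary configuration to such a case by peeling off one point at a time via a judiciously chosen auxiliary operator commuting appropriately with the $U_{\alpha_k}$.

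The hard part — and the reason the conjecture is still open — is precisely this last reduction for $N\ge4$ points in general position. When the $\alpha_k$ do not lie in any lattice, the phases $e^{2i\operatorname{Im}(\overline{\alpha_j}\alpha_k)}$ generate a subgroup dense in the circle, the representation is no longer that of a discrete group, and the zero-divisor machinery collapses; meanwhile the three parameters of $SL(2,\R)$ have already been consumed in normalizing and can no longer disentangle the configuration. I do not expect the Bargmann reformulation to circumvent this on its own, since it re-encodes the identical Heisenberg obstruction faithfully. Thus the plan settles the statement whenever $N\le3$, whenever the $\alpha_k$ lie on a lattice, or whenever $g$ carries extra structure (for instance compact support, which makes the entire function $F$ of controlled growth), but a proof in full generality would require a genuinely new device for controlling the dense, non-lattice orbit — exactly what the present reductions do not supply.
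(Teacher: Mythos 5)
The statement you were asked to prove is Conjecture C, the HRT conjecture, which the paper itself does not prove: it is presented explicitly as a core open problem, and the paper merely imports partial results from the literature (Theorem D: $N\le 3$ from Heil--Ramanathan--Topiwala, the $(2,2)$ parallel-line configurations from Demeter--Zaharescu, and lattice configurations from Linnell) and then applies them. So the correct verdict on your proposal is that it, too, does not prove the statement --- and to your credit you say so explicitly. What you sketch matches the known partial results and the paper's framework: the Bargmann transfer to Weyl operators on $F^2$ (which is precisely the paper's Conjecture C$'$, with $\lambda_k=-2\pi b_k-ia_k/2$), translation and metaplectic/$SL(2,\R)$ covariance in the $N\le 3$ case, and Linnell's von Neumann algebra argument in the lattice case. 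Your diagnosis of the obstruction --- for non-lattice configurations the commutator phases $e^{2i\operatorname{Im}(\overline{\alpha_j}\alpha_k)}$ generate a dense subgroup of the circle and the zero-divisor machinery for discrete group von Neumann algebras collapses --- is the standard and accurate one. There is no gap to report relative to the paper, because the paper has no proof to compare against; any purported complete proof here would have been wrong.

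Two cautions if you develop the partial-results portion carefully. First, your $N\le3$ sketch (``Taylor-coefficient analysis of exactly the quasi-periodic type underlying Theorem A'') compresses the genuinely distinct case of three non-collinear points: after normalization one gets a functional equation of the form $g(x-a)=m(x)g(x)$ with $m$ a one-frequency trigonometric polynomial, and the published argument iterates this and invokes ergodicity of the irrational rotation (via $\log|m|$) to contradict square-integrability; the collinear case is the easy one, and a pure coefficient count does not suffice. Second, your claim that the $U_{\alpha_k}$ ``generate a representation of a discrete Heisenberg-type group exactly when the $\alpha_k$ lie in a common lattice'' is imprecise --- they always generate a group; what the lattice hypothesis buys is that Linnell's Proposition 1.3 applies. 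Finally, note that your list of settled cases omits the Demeter--Zaharescu $(2,2)$ result, part (b) of the paper's Theorem D, which covers genuinely non-lattice four-point configurations and is one of the two routes by which the paper handles $d=4$; your coverage is therefore strictly smaller than what the paper actually uses.
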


We are actually going to use the Fock space version of the HRT conjecture. 
Recall that (see \cite{Zhu2012, Zhu2019} for example) the Bargmann 
transform is the integral operator defined by  
\begin{align*}
(\mathbf{B}g)(z)=\frac1{(2\pi)^{1/4}}\int_{\R}
e^{-izx+\frac{1}{2}z^2-\frac14x^2}g(x)\,\mathrm{d}x, \quad z\in\C.
\end{align*}
It is well known that $\mathbf{B}: L^2(\R, dx)\rightarrow F^2$ is a unitary 
transformation.  
 
For functions $g$ and $g_k$ in Conjecture C, it is well known (see 
\cite{Zhu2019} for example) and easy to check directly that
\[(\mathbf{B}g_k)(z)=e^{i\pi a_kb_k}(U_{\lambda_k}f)(z),\]
where 
$$f=\mathbf{B}g\quad{\rm and}\quad\lambda_{k}=-2\pi b_{k}-ia_{k}/2$$
for $1\le k\le N$. 

Thus the HRT conjecture can be stated in terms of 
the Fock space as follows.

\begin{conjecturecstar*}
If $\lambda_k$, $1\le k\le N$, are distinct points in $\C$ and 
$f\in F^2\setminus\{0\}$, then the functions $U_{\lambda_k}f$, 
$1\le k\le N$, are linearly independent in $F^2$.
\end{conjecturecstar*}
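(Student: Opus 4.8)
The plan is to prove the statement directly, by contradiction and induction on the number $N$ of points, using the Bargmann transform only to import the machinery of time-frequency analysis. Suppose there were distinct $\lambda_1,\dots,\lambda_N\in\C$, scalars $c_1,\dots,c_N$ not all zero, and $f\in F^2\setminus\{0\}$ with $\sum_{k=1}^N c_k\,U_{\lambda_k}f=0$. Unwinding the definition of the Weyl operators, this is the entire-function identity
\[
\sum_{k=1}^N c_k\,e^{-\frac12|\lambda_k|^2-\overline{\lambda_k}\,z}\,f(z+\lambda_k)=0,\qquad z\in\C.
\]
The first step is a normalization: since the Weyl operators satisfy $U_\alpha U_\beta=e^{i\,\mathrm{Im}(\overline\alpha\beta)}U_{\alpha+\beta}$, composing the relation with $U_{-\lambda_1}$ replaces each $\lambda_k$ by $\lambda_k-\lambda_1$ at the cost of a unimodular factor that can be absorbed into $c_k$. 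Hence I may assume $\lambda_1=0$, so that $f$ itself is one of the summands and the vanishing of the combination will be pitted directly against $f\ne0$.

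Next I would pass to $L^2(\R)$. Writing $g=\mathbf B^{-1}f\in L^2(\R)\setminus\{0\}$ and invoking the identity $(\mathbf Bg_k)(z)=e^{i\pi a_kb_k}(U_{\lambda_k}f)(z)$ recorded above, the unitarity of $\mathbf B$ turns the Fock-space relation into a vanishing linear combination $\sum_k c_k'\,g_k=0$ of genuine time-frequency shifts $g_k(x)=e^{2\pi i b_kx}g(x-a_k)$ of the single window $g$. In this picture I have access to the metaplectic representation: symplectic linear changes of the $(a,b)$-plane are implemented by unitaries on $L^2(\R)$, and I would use them, together with further translations and modulations, to bring the point set $\{(a_k,b_k)\}$ into a convenient normal form---placing as many points as possible on one horizontal line, say.

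The induction then removes one point at a time. After normalizing $(a_1,b_1)=(0,0)$, multiplying the relation by a modulation and subtracting a suitable translate of it cancels the summand at the origin and yields a shorter relation among time-frequency shifts of the \emph{same} window $g$, but at shifted frequencies. For the base cases $N\le 3$ this reduction terminates and forces $g=0$; the same is true in several structured situations---points lying on a common line, points on a translate of a lattice, or windows $g$ with enough decay or smoothness---so in all of these cases one recovers $g=0$, hence $f=0$, contradicting $f\ne0$.

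The main obstacle is exactly the inductive step for an arbitrary configuration of $N\ge4$ points in general position. The trouble is that each differencing step lowers the number of terms only by displacing the frequencies, and there is no known mechanism that simultaneously keeps the configuration inside a class where the base case applies and controls the window $g$; the metaplectic normalization, powerful as it is, cannot reduce a generic quadruple to a collinear or lattice one. This is precisely the still-open content of the HRT conjecture, so the argument above is complete only in the enumerated special cases, and the full strength of Conjecture C$^\prime$ for arbitrary $N$ remains contingent on resolving that obstacle.
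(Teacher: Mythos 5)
The statement you have been asked to prove is Conjecture C$^\prime$, and the paper does not prove it---nor could it, since it is precisely the Fock-space reformulation of the open HRT conjecture (Conjecture C). The paper's entire treatment of this statement is a transfer argument: because the Bargmann transform $\mathbf{B}\colon L^2(\R,\mathrm{d}x)\to F^2$ is unitary and satisfies $(\mathbf{B}g_k)(z)=e^{i\pi a_kb_k}(U_{\lambda_k}\mathbf{B}g)(z)$ with $\lambda_k=-2\pi b_k-ia_k/2$, a vanishing linear combination of the Weyl translates $U_{\lambda_k}f$ of $f=\mathbf{B}g$ is equivalent to a vanishing linear combination of the time-frequency shifts $g_k$ of $g$, so C$^\prime$ holds exactly when C does. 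Your opening moves---the normalization $\lambda_1=0$ via the composition law $U_\alpha U_\beta=e^{i\,\mathrm{Im}(\overline{\alpha}\beta)}U_{\alpha+\beta}$ (which is correct for the paper's convention $U_\alpha f(z)=e^{-\frac12|\alpha|^2-\overline{\alpha}z}f(z+\alpha)$) and the passage to $L^2(\R)$ through $\mathbf{B}$---reproduce this transfer faithfully.

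The genuine gap is everything after that, and to your credit you name it yourself in your final paragraph: the differencing step that cancels the summand at the origin displaces the frequencies of the surviving terms, and neither that operation nor the metaplectic normalization confines the resulting configuration to the classes where a conclusion is actually known ($N\le3$, the $(2,2)$ parallel-lines configuration, or point sets inside a regular lattice, i.e.\ the cases collected in Theorem D). Consequently your argument proves the statement only in those already-known special cases, which is exactly the status quo; the general inductive step is not a technical detail to be filled in but the open content of HRT itself. Note that the paper is structured around this very limitation: Conjecture C$^\prime$ is never proved there, but is instead used as a \emph{hypothesis} in Theorem~\ref{t6}, with the known special cases (imported via Theorem~\ref{t1} and Theorem D) supplying the unconditional results of Corollary~\ref{c7}. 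A submission matching the paper's actual logical content would either confine itself to verifying the equivalence of Conjectures C and C$^\prime$ (checking the Bargmann identity and unitarity), or explicitly scope its conclusion to the cases of Theorem D; claiming the statement in full generality would amount to claiming a solution of the HRT conjecture.
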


Although the HRT conjecture remains open, there has been a considerable 
amount of work in the literature concerning the problem. In the following 
theorem we collect a few partial results relevant to our work here. 

\begin{theoremd*}
The HRT conjecture holds true
\begin{enumerate}
\item[(a)] if $N \leq 3$,   
\item[(b)] or if $N = 4$ and, among the four points in $\{(a_k, b_k)\}_{k=1}^{4}$, 
two lie on one line and the other two lie on a second parallel line,
\item[(c)] or if
$$\left\{\begin{pmatrix}a_k\\ b_k\end{pmatrix}: 1\le k\le N\right\}
\subset A\Z^2+\begin{pmatrix} a\\ b\end{pmatrix},$$ 
where $\Z^2=\left\{\begin{pmatrix}n\\ m\end{pmatrix}: n,m\in\Z\right\}$ is the 
ordinary integer lattice in $\R^2$, $A$ is an invertible $2\times2$ matrix, 
and $(a,b)\in\R^2$.
\end{enumerate}
\end{theoremd*}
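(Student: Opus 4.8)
These three statements are established results, so the plan is to assemble their proofs from the standard toolkit of time-frequency analysis, organized around two mechanisms. The first is \emph{metaplectic covariance}: there is a projective unitary representation $\mu$ of $SL(2,\R)$ on $L^2(\R,\mathrm dx)$ such that, writing $\sigma_{(a,b)}$ for the time-frequency shift $\sigma_{(a,b)}g(x)=e^{2\pi i bx}g(x-a)$, one has $\mu(S)\,\sigma_{(a,b)}\,\mu(S)^{-1}=\omega(S,a,b)\,\sigma_{S(a,b)}$ for a unimodular phase $\omega$ and every $S\in SL(2,\R)$. Because $\mu(S)$ is unitary, it preserves linear independence of a family $\{\sigma_{(a_k,b_k)}g\}$ while acting on the points $(a_k,b_k)$ by the area-preserving map $S$; in the Fock picture this is exactly the conjugation $R_\theta U_\lambda R_\theta^{-1}=U_{e^{i\theta}\lambda}$ for the rotations $R_\theta f(z)=f(e^{-i\theta}z)$, together with the squeezing unitaries. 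Combined with the freedom to translate one point to the origin, this lets me move any configuration into a convenient normal form before doing analysis. I would set this covariance up once and invoke it throughout.

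For part (a) the cases $N\le2$ are immediate, so take $N=3$ and normalize one point to $0$. If the three points are collinear, a symplectic map carries their common line to the horizontal axis, turning the dependence relation into $\sum_k c_k g(x-a_k)=0$; a Fourier transform gives $\big(\sum_k c_k e^{-2\pi i a_k\xi}\big)\hat g(\xi)=0$, and since a nontrivial exponential sum vanishes only on a set of measure zero, $\hat g=0$, a contradiction. If the three points are in general position, a symplectic map brings them to $0,(1,0),(0,s)$ and the relation becomes the two-term recurrence $\big(c_0+c_2e^{2\pi isx}\big)g(x)=-c_1g(x-1)$; iterating along each coset $x_0+\Z$ and comparing the geometric growth of the resulting products (governed by the mean of $\log|c_0+c_2e^{2\pi isx}|$) against the square-integrability of $g$ forces $g=0$. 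Part (b) runs on the same engine: a shear makes the two parallel lines horizontal, at heights $0$ and $\beta$, and a Fourier transform turns the relation into $P(\xi)\hat g(\xi)+Q(\xi)\hat g(\xi-\beta)=0$ with $P,Q$ two-term exponential sums. This is again a first-order recurrence, now along the cosets of $\beta\Z$, and the same $L^2$-growth dichotomy rules out a nontrivial solution; the only delicate point is the bookkeeping at the zeros of $P$, where the recurrence degenerates.

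For part (c) I would first reduce to a square lattice. A symplectic unitary diagonalizes $A\Z^2$ to $\alpha\Z\times\beta\Z$, an $L^2$-dilation rescales this to $\gamma\Z\times\gamma\Z$ with $\gamma=|\det A|^{1/2}$, and a single time-frequency shift removes the translation $(a,b)$; note that the covolume $|\det A|$ is a symplectic invariant and cannot be scaled away. The relation then exhibits $g\ne0$ as a vector annihilated by a nonzero element $T=\sum_k c_k\,\sigma_{(a_k,b_k)}$ of the twisted group algebra of $\Z^2$ — with cocycle governed by $\gamma^2=|\det A|$ — sitting inside the von Neumann algebra $\mathcal M$ generated by the lattice shifts. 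Since $\Z^2$ is amenable, $\mathcal M$ is a finite von Neumann algebra carrying a faithful normal trace, and I would conclude via Linnell's observation that, in the representation afforded by $g$ together with the commuting shifts along the adjoint lattice, the trace and coupling-constant machinery force a nonzero $T$ to act injectively, so that $Tg=0$ is impossible.

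I expect this last step to be the genuine obstacle. Finiteness of $\mathcal M$ alone does not forbid zero divisors — its factors are of type $\mathrm{II}_1$ and carry complementary projections — so the argument must use the specific geometry of the representation on $L^2(\R)$: the interplay of the lattice shifts with the adjoint-lattice shifts and the role of $g$ as a separating-type vector for the relevant cyclic subspace. Tracking this, and splitting off the irrational case $\gamma^2\notin\Q$ (where $\mathcal M$ is a factor) from the rational case (where a Zak-transform fibering is cleaner), is where the real work concentrates; by contrast parts (a) and (b) reduce to the elementary recurrence analysis above.
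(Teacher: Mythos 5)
The paper does not actually prove Theorem D: its ``proof'' consists of three citations (part (a) to Theorem 1 of \cite{HRT1996}, part (b) to Theorem 1.4 of \cite{DZ2012}, part (c) to Proposition 1.3 of \cite{LPA1999}), since the theorem is an explicit compilation of known results. You instead set out to reconstruct the proofs, and while your metaplectic-covariance setup and your part (a) sketch do track the original HRT argument (the remaining loose ends there --- rational $s$, and the equal-modulus case where the mean of $\log|c_0+c_2e^{2\pi isx}|$ sits at the boundary --- are standard to repair), your part (b) contains a genuine gap. After the shear and Fourier transform, the relation $P(\xi)\hat g(\xi)+Q(\xi)\hat g(\xi-\beta)=0$ does \emph{not} succumb to ``the same $L^2$-growth dichotomy'' with the zeros of $P$ as mere ``bookkeeping.'' The observable $\log|P/Q|$ has logarithmic singularities, so unique ergodicity of the underlying irrational rotation gives no convergence of Birkhoff averages along the specific orbit you must follow, and the critical regime where the two logarithmic means coincide (coefficients of equal modulus) is precisely where Demeter and Zaharescu are forced into serious Diophantine analysis --- separating Liouville-type from Diophantine parameters and estimating trigonometric products along orbits that approach the singularities. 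If this step were bookkeeping, the $(2,2)$ case would not have remained open for sixteen years after \cite{HRT1996}; as written, your recurrence argument fails exactly there.

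Part (c) has a gap that you yourself flag: after the (correct) symplectic reduction to a square lattice and removal of the offset by a single shift, the crux is to show that a nonzero element $T$ of the twisted group von Neumann algebra $\mathcal{M}$ cannot annihilate a nonzero $g\in L^2(\R)$, and you explicitly leave this open, rightly observing that finiteness of $\mathcal{M}$ alone does not exclude zero divisors (and indeed $Tg=0$ with $T\ne0$ is not a zero-divisor statement at all, since $g$ is a vector, not an algebra element). What closes this in Linnell's paper is the von Neumann dimension (coupling constant) computation: trace-orthogonality $\tau(\sigma_\mu^*\sigma_\lambda)=0$ for $\lambda\ne\mu$ makes a nontrivial linear combination a nonzero element of $\mathcal{M}$, and a dimension inequality for $L^2(\R)$ as a module over $\mathcal{M}$ (interacting with the adjoint-lattice commutant) forces every nonzero vector to be separating for $\mathcal{M}$. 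Your proposal names the right objects but does not carry out this count, so it identifies the obstacle without overcoming it. In short: relative to the paper, which appropriately proves Theorem D by citation, your blind reconstruction is a reasonable roadmap for (a) but is incomplete at the two points --- the number theory in (b) and the dimension argument in (c) --- that constitute the actual content of the cited theorems.
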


Part (a) follows from Theorem 1 in \cite{HRT1996}, part (b) follows from 
Theorem 1.4 of \cite{DZ2012}, and part (c) follows from Proposition 1.3 
of \cite{LPA1999}. See \cite{HRT1996, LDF2022, SDW2015} and references 
therein for further information and results about the HRT conjecture.

We will call the set $A\Z^2+\begin{pmatrix} a\\ b\end{pmatrix}$ a regular 
lattice in $\R^2$. Similarly, we can also talk about regular lattices in $\C$,
which are sets of the form $\Z e_1+\Z e_2+e$, where $e$ is any complex
number, and $e_1,e_2$ are any two $\R$-independent complex numbers. 

It is easy to see that the mapping
$$z=-2\pi y-ix/2$$ 
(see earlier remarks about the Bargmann transform) from the real 
Euclidean plane $\R^2$ to the complex plane $\C$ has the following properties:
\begin{enumerate}
\item[(i)] It maps straight lines in $\R^2$ to straight lines in $\C$.
\item[(ii)] It maps parallel lines in $\R^2$ to parellel lines in $\C$.
\item[(iii)] It maps any regular lattice in $\R^2$ to a regular lattice in $\C$.
\end{enumerate}
Therefore, a corresponding version of Theorem D also holds for the Fock
space version of the HRT conjecture when points in $\R^2$ are replaced
by points in $\C$.

\begin{theorem}\label{t1}
Let $d\in\{1,2,3,4,6\}$ and $\beta\in\C\setminus\{0\}$. Then (the Fock 
space version of) the HRT conjecture is valid for the points
$$\lambda_k=\beta e^{2k\pi i/d},\qquad 0\le k\le d-1.$$
\end{theorem}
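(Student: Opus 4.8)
The plan is to reduce each of the five cases to one of the conditions in (the Fock-space version of) Theorem~D, treating $d\in\{3,4,6\}$ uniformly through the lattice condition~(c). First note that since $\beta\neq0$, the points $\lambda_k=\beta e^{2k\pi i/d}$, $0\le k\le d-1$, are distinct, so the conjecture is genuinely being tested on $N=d$ distinct points. For $d\in\{1,2,3\}$ we have $N=d\le3$, so part~(a) of Theorem~D applies immediately and there is nothing further to prove.

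For the remaining cases the key observation is that $\zeta:=e^{2\pi i/d}$ satisfies a monic quadratic with integer coefficients: explicitly $\zeta^2=-1$ when $d=4$ and $\zeta^2=\zeta-1$ when $d=6$ (the cyclotomic minimal polynomials $x^2+1$ and $x^2-x+1$). Consequently $\zeta^2\in\Z+\Z\zeta$, and an immediate induction gives $\zeta^k\in\Z+\Z\zeta$ for every $k\ge0$. I would then set $e_1=\beta$ and $e_2=\beta\zeta$; because $\zeta\notin\R$ and $\beta\neq0$, these are $\R$-linearly independent, so $L=\Z e_1+\Z e_2$ is a genuine regular lattice in $\C$. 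Multiplying the relation $\zeta^k\in\Z+\Z\zeta$ by $\beta$ shows $\lambda_k=\beta\zeta^k\in L$ for all $k$, so all $d$ points lie in a single regular lattice and part~(c) of (the Fock-space version of) Theorem~D finishes both cases.

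The same quadratic relation also covers $d=3$ via the lattice route (using $\zeta^2=-\zeta-1$ from $x^2+x+1$), so in truth parts~(a) and~(c) together suffice and one never needs part~(b); for $d=4$ one could alternatively pair the vertices of the square as $\{\beta,i\beta\}$ and $\{-\beta,-i\beta\}$, whose connecting lines share the direction $\beta(i-1)$ and are distinct, and invoke part~(b). There is no serious analytic obstacle here: the content lies entirely in the arithmetic fact that $e^{2\pi i/d}$ generates a rank-two $\Z$-lattice precisely when $\phi(d)\le2$, i.e.\ exactly for $d\in\{1,2,3,4,6\}$ (the crystallographic restriction). Verifying the displayed quadratic identities and the induction is routine, and the transport of Theorem~D to $\C$ has already been recorded in the remarks preceding the statement, so the only point demanding care is checking that the generators $e_1,e_2$ are $\R$-independent, which fails only in the degenerate cases $d=1,2$ that are instead dispatched by part~(a).
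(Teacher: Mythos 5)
Your proof is correct and follows essentially the same route as the paper: cases $d\le3$ (and trivially $d=1$) via part~(a) of Theorem~D, and $d=4,6$ via part~(c) by placing $\lambda_k=\beta\zeta^k$ in the regular lattice $\beta(\Z+\Z\zeta)$, which is exactly the paper's argument for $d=6$ (your induction from the minimal polynomial $\zeta^2=\zeta-1$ is just a cleaner packaging of the paper's explicit list $\omega^2=\omega-1$, $\omega^3=-1$, etc.). Your added checks --- distinctness of the points, $\R$-independence of $\beta$ and $\beta\zeta$, and the alternative part~(b) treatment of $d=4$ --- all match or harmlessly supplement what the paper does.
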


\begin{proof}
The case $d=1$ is trivial. The cases $d=2$ and $3$ follow from part (a) 
or part (c) of Theorem D. It is clear that the case $d=4$ follows from 
part (b) or part (c) of Theorem D.

Less obvious is the case $d=6$, which we show also follows from 
part (c) of Theorem D. In fact, with
$$\omega=e^{2\pi i/6}=e^{\pi i/3},$$
we have $\lambda_k=\beta\omega^k$ for $0\le k\le 5$, and
$$\omega^0= 1,\ \omega^1=\omega,\ \omega^2=\omega-1,
\ \omega^3=-1,\ \omega^4=-\omega,\ \omega^5=1-\omega.$$
This shows that, for every $0\leq k\leq 5$, we have
$$-\beta\omega^k=-\beta(m_k+n_k\omega),
\qquad m_k, n_k\in\{-1,0,1\}.$$
It follows that the points in $\{1,\omega,\cdots,\omega^5\}$ belong to
the regular lattice $\Z+\omega\Z$ in $\C$ (see picture below). 
Consequently, the points in $\{\beta e^{2k\pi i/6}: 0\le k\le 5\}$ belong
to the regular lattice $\beta(\Z+\omega\Z)$. This completes the proof 
of the theorem.
\end{proof}

\bigskip
\begin{tikzpicture}[scale=2.5]
\clip (-2.2, -1.8) rectangle (2.2, 1.8);
\def\R{1} 
\draw[dashed, black!80] (-2.5, 0) -- (2.5, 0);
\draw[dashed, black!80] (-2.5, {sin(60)}) -- (2.5, {sin(60)});
\draw[dashed, black!80] (-2.5, {-sin(60)}) -- (2.5, {-sin(60)});
\draw[dashed, black!80] (240:2.5) -- (60:2.5);
\draw[dashed, black!80] ($(-1,0) + (240:2.5)$) -- ($(-1,0) + (60:2.5)$);
\draw[dashed, black!80] ($(1,0) + (240:2.5)$) -- ($(1,0) + (60:2.5)$);
\draw[thick, black] (0,0) circle (\R);
\fill[black] (0,0) circle (1.5pt) node[below left] {$0$};
\foreach \x/\angle in {0/0, 1/60, 2/120, 3/180, 4/240, 5/300} 
{\coordinate (w\x) at (\angle:\R);
\fill[black] (w\x) circle (1.5pt);
\node at (\angle:{\R+0.2}) {$\omega^{\x}$};}
\end{tikzpicture}
\bigskip

For any integer $d>1$ the points $\omega_k=\exp(2k\pi i/d)$,
$0\le k\le d-1$, are so symmetrically distributed on the unit circle.
It is very tempting for us to guess that they must always belong to 
a certain regular lattice. It turns out that this is NOT the case! 
We will need some algebraic preliminaries to prove this.

Let $\N$ denote the set of all positive integers. Recall that the 
Euler function $\phi: \N\to\N$ is defined by
$$\phi(n) = \begin{cases}
\prod_{j=1}^{r}p_{j}^{k_{j}-1}(p_{j}-1), & n>1,\\
1, & n=1,\end{cases}$$
where 
$$n=p_{1}^{k_{1}} p_{2}^{k_{2}} \cdots p_{r}^{k_{r}}, 
\qquad p_1<\cdots<p_r,$$
is the unique prime factorization for $n>1$. See \cite[pp. 95-96]{L2002}.

\begin{lemma}\label{l2}
$\phi(n)$ is even for all $n>2$. 
\end{lemma}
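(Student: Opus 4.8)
The plan is to prove that $\phi(n)$ is even for every integer $n > 2$ by a direct case analysis based on the prime factorization formula. Since the formula for $\phi$ is multiplicative, I would reduce the question to examining the contribution of each prime power factor and show that at least one such factor is even whenever $n > 2$.

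First I would recall from the displayed formula that for $n > 1$ with factorization $n = p_1^{k_1}\cdots p_r^{k_r}$ we have $\phi(n) = \prod_{j=1}^r p_j^{k_j - 1}(p_j - 1)$. Now I split into two cases according to whether $n$ has an odd prime factor. If some prime $p_j$ is odd, then $p_j - 1$ is even, so the corresponding factor $p_j^{k_j-1}(p_j-1)$ is even, and hence the whole product $\phi(n)$ is even. The remaining case is when $n$ has no odd prime factor, meaning $n = 2^{k}$ is a pure power of $2$. Here $\phi(n) = 2^{k-1}(2-1) = 2^{k-1}$, and since $n > 2$ forces $k \ge 2$, we get $k - 1 \ge 1$, so $\phi(n) = 2^{k-1}$ is even.

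Combining the two cases shows that $\phi(n)$ is even for all $n > 2$, as claimed. The only point requiring any care is the boundary: I must make sure the argument genuinely uses $n > 2$ rather than $n > 1$, since $\phi(2) = 1$ is odd. This is precisely handled in the pure-power-of-two case, where the hypothesis $n > 2$ is exactly what guarantees the exponent $k - 1$ is at least $1$. I do not anticipate any real obstacle here; the proof is elementary and the main task is simply organizing the case split cleanly so that both an odd prime factor and the exceptional dyadic case are covered.
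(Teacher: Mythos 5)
Your proof is correct and is essentially identical to the paper's: both split into the case where $n$ has an odd prime factor $p$ (so $p-1$ makes $\phi(n)$ even) and the case $n = 2^k$ with $k \ge 2$ (so $\phi(n) = 2^{k-1}$ is even). Your extra remark about where the hypothesis $n > 2$ is used is a nice touch but does not change the argument.
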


\begin{proof}
If $n$ contains an odd prime factor $p$, then $\phi(n)$ includes 
the factor $p-1$, which is even. If $n$ contains no odd prime factors, 
then $n=2^{k}$ with $k\geq 2$, and in this case $\phi(n)=2^{k-1}$ 
is also even.
\end{proof}

\begin{lemma}\label{l3}
The solutions to the equation $\phi(n)=2$ are $n=3, 4, 6$.
\end{lemma}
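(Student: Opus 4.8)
The plan is to solve $\phi(n)=2$ by exploiting the multiplicativity of the Euler function together with Lemma~\ref{l2}, which already tells us that $\phi(n)$ is even for $n>2$, so $\phi(n)=2$ is at least plausible for a range of $n$. First I would record the basic values $\phi(1)=1$, $\phi(2)=1$, which immediately rule out $n=1,2$ as solutions, so any solution must satisfy $n\ge3$. Then I would use the prime factorization $n=p_1^{k_1}\cdots p_r^{k_r}$ and the explicit product formula for $\phi(n)$ from the definition above to constrain the possible prime factors and their exponents.

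The key observation is that $\phi(n)$ is a product of factors of the form $p_j^{k_j-1}(p_j-1)$, each of which is a positive integer, and the product equals $2$. Since $2$ has very few factorizations into positive integers, I would argue that the set of admissible prime-power factors is severely limited. Concretely, for an odd prime $p$ the factor $p^{k-1}(p-1)$ is at least $p-1\ge2$, with equality only when $p=3$ and $k=1$; for larger odd primes $p-1\ge4>2$, forcing those primes out entirely. For the prime $2$, the factor $2^{k-1}(2-1)=2^{k-1}$ equals $2$ exactly when $k=2$ and equals $1$ when $k=1$. Assembling these constraints, the only ways to build a product equal to $2$ are: a single factor $3-1=2$ (giving $n=3$), a single factor $2^{2-1}=2$ (giving $n=4$), or the product $(2^{1-1})(3-1)=1\cdot2=2$ (giving $n=6=2\cdot3$). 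Any factor from a prime $p\ge5$, or a higher power of $3$, or $2^k$ with $k\ge3$, would push the product strictly above $2$.

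Thus the main step is a short case analysis on which primes can divide $n$ and at what power, using the fact that each local factor $p^{k-1}(p-1)$ is bounded below in a way that leaves only finitely many compatible choices. The one mild subtlety to handle cleanly is ruling out the simultaneous presence of more than one odd prime (which would force the product to be at least $(p_1-1)(p_2-1)\ge2\cdot4=8$) and ruling out $2^2$ occurring alongside an odd prime (which would give at least $2\cdot2=4$); I would dispatch both by the same lower-bound estimate on the product. I do not expect any genuine obstacle here, since the equation $\phi(n)=2$ admits only the finitely many solutions already named; the work is entirely in organizing the elementary bounds so that no candidate is overlooked. The conclusion is that the complete solution set is $\{3,4,6\}$, as claimed.
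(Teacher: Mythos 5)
Your proof is correct and follows essentially the same route as the paper: both arguments use the product formula for $\phi$ to rule out primes $p\ge5$ (since $p-1\ge4$) and then perform a finite case analysis on the exponents of $2$ and $3$. Your reorganization via lower bounds on the local factors $p^{k-1}(p-1)$ is just a slightly different packaging of the paper's three-case check, with no substantive difference.
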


\begin{proof}
It is clear that $\phi(n)=2$ for $n=3, 4, 6$. On the other hand, if $n$
is any solution of $\phi(n)=2$, then $n>1$ and
$n=2^{k_{1}}3^{k_{2}}5^{k_{3}}\cdots$, where each $k_{j}\geq 0$ 
and not all are zero. If $n$ has a prime factor $p\geq 5$, then 
$\phi(n)\geq 4$. Therefore, $k_{j}=0$ for all $j\geq 3$ and and we
must have $n = 2^{k_{1}}3^{k_{2}}$.

\begin{enumerate}
\item[(a)] If $k_{1}=0$, then $2=\phi(n)=\phi(3^{k_2})=3^{k_2-1}(3-1)$,
which gives $k_{2}=1$ and $n=3$.
\item[(b)] If $k_{2}=0$, then $2=\phi(n)=\phi(2^{k_1})=2^{k_1-1}(2-1)$,
which gives $k_{1}=2$ and $n=4$.
\item[(c)] If $k_{1}$ and $k_{2}$ are both nonzero, then 
$2=\phi(n)=\phi(2^{k_{1}}3^{k_{2}})=2^{k_{1}-1}3^{k_{2}-1}\cdot 2$, 
which gives $k_{1}=k_{2}=1$ and $n=6$.
\end{enumerate}
This proves the desired result.
\end{proof}

Let $\mathbb{F}$ be a subfield of another field $\mathbb{E}$. Following
\cite[pp.223-224]{L2002}, we denote by $[\mathbb{E}:\mathbb{F}]$ the 
dimension of $\mathbb{E}$ as vector space over $\mathbb{F}$. 

\begin{theoreme*}\cite[Theorem 3.1, pp.278]{L2002}
Let $d$ be a positive integer and $\omega=e^{2\pi i/d}$. Then 
$$[\mathbb{Q}(\omega): \mathbb{Q}]=\phi(d),$$ 
where $\mathbb{Q}$ denotes the field of rational numbers, 
$$\mathbb{Q}(\omega)=\text{span}_{\mathbb{Q}}
\left\{\omega^k:k=0,\cdots,d-1\right\},$$ 
and $\phi$ is the Euler function.
\end{theoreme*}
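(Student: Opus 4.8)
The plan is to reduce the dimension count to the irreducibility of the $d$-th cyclotomic polynomial. First I would recall the standard fact that, because $\omega$ is algebraic over $\mathbb{Q}$, the space $\mathbb{Q}(\omega)=\mathrm{span}_{\mathbb{Q}}\{\omega^k:0\le k\le d-1\}$ is actually a field (a finite-dimensional integral domain over $\mathbb{Q}$, closed under multiplication since $\omega^d=1$), and its dimension $[\mathbb{Q}(\omega):\mathbb{Q}]$ equals the degree of the minimal polynomial $f$ of $\omega$ over $\mathbb{Q}$. Thus the theorem is equivalent to showing that $\deg f=\phi(d)$.

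Next I would introduce the $d$-th cyclotomic polynomial
$$\Phi_d(x)=\prod_{\substack{0\le k\le d-1\\ \gcd(k,d)=1}}(x-\omega^k),$$
whose degree is $\phi(d)$ by definition of the Euler function. Since $\omega$ is a primitive $d$-th root of unity, it is a root of $\Phi_d$, so $f\mid\Phi_d$; this already yields $[\mathbb{Q}(\omega):\mathbb{Q}]=\deg f\le\phi(d)$. To obtain equality I must show $\Phi_d$ is irreducible over $\mathbb{Q}$, i.e.\ $f=\Phi_d$. As preparation I would record that $\Phi_d\in\mathbb{Z}[x]$: this follows by strong induction on $d$ from the identity $x^d-1=\prod_{e\mid d}\Phi_e(x)$, dividing the monic integer polynomial $x^d-1$ by the monic integer polynomial $\prod_{e\mid d,\,e<d}\Phi_e(x)$ and invoking Gauss's lemma. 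Likewise $f$ may be taken monic in $\mathbb{Z}[x]$, and we write $\Phi_d=fg$ with $g\in\mathbb{Z}[x]$.

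The hard part will be the irreducibility, for which I would use the classical Frobenius-modulo-$p$ argument. The central claim is: if $\zeta$ is any root of $f$ and $p$ is a prime not dividing $d$, then $\zeta^p$ is also a root of $f$. Suppose not. Since $p\nmid d$, the power $\zeta^p$ is again a primitive $d$-th root of unity, hence a root of $\Phi_d=fg$; if $f(\zeta^p)\ne0$ then $g(\zeta^p)=0$, so $\zeta$ is a root of $g(x^p)$. Because $f$ is irreducible with $f(\zeta)=0$, it is the minimal polynomial of $\zeta$, forcing $f(x)\mid g(x^p)$ in $\mathbb{Z}[x]$. Reducing modulo $p$ and using the Frobenius identity $g(x^p)\equiv g(x)^p\pmod p$, I would deduce that any irreducible factor of $\bar f$ in $\mathbb{F}_p[x]$ divides $\bar g^p$, hence divides $\bar g$. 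Then $\bar f$ and $\bar g$ share a common factor, so $\overline{x^d-1}$, which is divisible by $\bar f\bar g=\overline{\Phi_d}$, has a repeated factor in $\mathbb{F}_p[x]$. This contradicts the separability of $x^d-1$ modulo $p$, which holds because $\gcd(x^d-1,\,dx^{d-1})=1$ in $\mathbb{F}_p[x]$ when $p\nmid d$. The contradiction proves the claim.

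Finally I would iterate the claim. Every primitive $d$-th root of unity has the form $\omega^k$ with $\gcd(k,d)=1$, and writing $k$ as a product of primes, none of which divides $d$, repeated application of the claim (starting from the root $\omega$ of $f$) shows that every such $\omega^k$ is a root of $f$. Hence $\Phi_d\mid f$, so $f=\Phi_d$ and $\deg f=\phi(d)$, giving $[\mathbb{Q}(\omega):\mathbb{Q}]=\phi(d)$ as desired. The only genuinely delicate step is the separability/Frobenius argument of the third paragraph; everything else is bookkeeping in field theory and an induction for integrality.
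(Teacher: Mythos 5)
Your proof is correct: the paper states Theorem E without giving its own proof, citing \cite[Theorem 3.1, pp.~278]{L2002}, and your argument---reducing $[\mathbb{Q}(\omega):\mathbb{Q}]=\phi(d)$ to the irreducibility of $\Phi_d$ via the Frobenius-modulo-$p$ claim and the separability of $x^d-1$ for $p\nmid d$---is precisely the classical Dedekind argument used in that cited reference. Nothing further is needed.
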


We can now show that, for $4<d\not=6$, the $d$-th roots of unity
cannot be placed in any regular lattice. There is a chance that the 
following result is known to experts in algebra. But we are not
able to find a specific reference, so a complete proof is given here.

\begin{theorem}\label{t4}
Suppose $d\in\N$ and $\omega=e^{2\pi i/d}$. Then the
following are equivalent.
\begin{enumerate}
\item[(a)] The points
$$\lambda_k=\omega^k=e^{2k\pi i/d},\qquad 0\le k\le d-1,$$
belong to a regular lattice on the complex plane.
\item[(b)] $n\in\{1,2,3,4,6\}$.
\end{enumerate}
\end{theorem}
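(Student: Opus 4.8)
The plan is to prove the two implications separately, with the forward implication (b) $\Rightarrow$ (a) being routine and the reverse (a) $\Rightarrow$ (b) carrying the real content through a dimension-counting argument that feeds into Theorem E together with Lemmas~\ref{l2} and~\ref{l3}.

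For (b) $\Rightarrow$ (a), I would simply exhibit an explicit regular lattice for each of the five values. For $d\in\{1,2,4\}$ the points $\lambda_k$ all lie in the Gaussian lattice $\Z+\Z i$; for $d=3$ they lie in the lattice $\Z+\Z\omega$ with $\omega=e^{2\pi i/3}$, using the relation $\omega^2=-1-\omega$; and the case $d=6$ was already verified in the proof of Theorem~\ref{t1}.

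The core of the argument is (a) $\Rightarrow$ (b). Suppose the points $\lambda_k=\omega^k$ all lie in a regular lattice $\Z e_1+\Z e_2+e$, where $e_1,e_2$ are $\R$-independent. First I would eliminate the affine translation $e$ by passing to differences: since $1=\omega^0$ lies in the lattice, each $\omega^k-1$ lies in the subgroup $\Z e_1+\Z e_2$. Next I would observe that $\R$-independence of $e_1,e_2$ forces their $\Q$-independence, so that $W:=\Q e_1+\Q e_2$ is a two-dimensional $\Q$-subspace of $\C$. Since $\omega^k=1+(\omega^k-1)\in\Q\cdot 1+W$ for every $k$, the whole field $\Q(\omega)=\mathrm{span}_{\Q}\{\omega^k:0\le k\le d-1\}$ is contained in $\Q\cdot 1+W$, a $\Q$-space of dimension at most $3$. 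Theorem E then gives
\[
\phi(d)=[\,\Q(\omega):\Q\,]\le 3.
\]

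The conclusion is then purely number-theoretic. For $d=1,2$ we have $\phi(d)=1$, so these already lie in the claimed list. For $d>2$, Lemma~\ref{l2} says $\phi(d)$ is even, and combined with $\phi(d)\le 3$ this forces $\phi(d)=2$; Lemma~\ref{l3} then yields $d\in\{3,4,6\}$. Altogether $d\in\{1,2,3,4,6\}$, as required. I expect the only delicate point to be the passage from the \emph{affine} lattice containing the roots of unity to a \emph{linear} dimension bound on $\Q(\omega)$ over $\Q$ --- namely the clean extraction of the factor-of-$3$ bound --- after which the stated lemmas close the argument immediately.
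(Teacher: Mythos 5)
Your proof is correct, and its overall skeleton is the same as the paper's: trap the powers of $\omega$ in a $\Q$-subspace of dimension at most $3$, invoke Theorem E to get $\phi(d)\le 3$, then finish with Lemmas~\ref{l2} and~\ref{l3}. Where you genuinely diverge is the middle step, and your version is cleaner. The paper writes $v_k=\lambda_k-\lambda_0$ as integer combinations of $e_1,e_2$, forms $2\times2$ integer matrices, argues these are invertible (this implicitly needs $v_1$ and $v_k$ to be $\R$-independent, i.e.\ that $1,\omega,\omega^k$ are not collinear --- true because distinct points on a circle are never collinear, but the paper asserts it without comment), and then inverts over $\Q$ to express each $v_k$ rationally in $v_1,v_2$. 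You sidestep all of this: since $\R$-independence of $e_1,e_2$ gives $\Q$-independence, $\Z e_1+\Z e_2\subset W:=\Q e_1+\Q e_2$ is already contained in a $2$-dimensional $\Q$-space, so each $\omega^k=1+(\omega^k-1)\in\Q\cdot 1+W$ immediately, with no matrix inversion and no collinearity check. What your route buys is economy and robustness (nothing special about the points being roots of unity is used until Theorem E is applied); what the paper's route makes explicit is the concrete rational change of basis from $\{e_1,e_2\}$ to $\{v_1,v_2\}$, which is not actually needed for the dimension count. Your handling of the endgame is also slightly more careful than the paper's: you treat $d=1,2$ separately (where $\phi(d)=1$ and Lemma~\ref{l2} does not apply), whereas the paper restricts to $d>4$ from the outset; both are correct, and your (b)~$\Rightarrow$~(a) direction matches the paper's (explicit lattices for $d\le 4$, the hexagonal lattice $\Z+\Z e^{\pi i/3}$ from Theorem~\ref{t1} for $d=6$).
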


\begin{proof}
It is obvious that the points in $\{\omega^k: 0\le k\le d-1\}$ 
belong to a regular lattice in $\C$ when $d\in\{1,2,3,4\}$. 
By Theorem~\ref{t1}, this is also true when $d=6$.

Next we fix $d>4$ and assume that 
$\{\lambda_0,\lambda_1,\cdots,\lambda_{d-1}\}$ belong 
to a regular lattice
$$L=\Z e_1+\Z e_2+e$$ 
in the complex plane, where $e\in\C$ and $e_1,e_2$ are two 
complex numbers that are linearly independent with respect 
to $\R$.

For $1\le k\le d-1$ we write
$$v_k=\lambda_k-\lambda_0=n_{k1}e_1+n_{k2}e_2,$$
where $n_{k1}$ and $n_{k2}$ are integers. Then for $k\ge2$
we can write
$$\begin{pmatrix}v_1\\ v_k\end{pmatrix}=
\begin{pmatrix}n_{11} & n_{12}\\ n_{k1} & n_{k2}\end{pmatrix}
\begin{pmatrix}e_1\\ e_2\end{pmatrix}.$$
With respect to the real numbers, $v_1$ and $v_k$ are linearly
independent, and $e_1$ and $e_2$ are linearly independent. It
follows that each of the above $2\times2$ matrices of integers
is invertible. In particular, we have
$$\begin{pmatrix}e_1\\ e_2\end{pmatrix}=\begin{pmatrix}
n_{11} & n_{12}\\ n_{21} & n_{22}\end{pmatrix}^{-1}
\begin{pmatrix}v_1\\ v_2\end{pmatrix}.$$
Thus for $k\ge3$ we can write
$$\begin{pmatrix}v_1\\ v_k\end{pmatrix}=\begin{pmatrix}
n_{11} & n_{12}\\ n_{k1} & n_{k2}\end{pmatrix}
\begin{pmatrix}n_{11} & n_{12}\\ n_{21} & n_{22}\end{pmatrix}^{-1}
\begin{pmatrix}v_1 \\ v_2\end{pmatrix}.$$
Elementary linear algebra tells us that the inverse of an invertible
matrix with integer entries has rational entries. It follows that each
$v_k$, $k\ge3$, is a linear combination of $v_1$ and $v_2$ with
rational coefficients. Consequently, the sets 
$$\{v_0, v_1,\cdots, v_{d-1}\}\quad{\rm and}\quad
\{\lambda_0,\lambda_1,\cdots,\lambda_{d-1}\}$$ 
are both contained in ${\rm Span}_{\Q}\{1,\omega,\omega^2\}$,
which implies $\left[\Q(\omega) : \Q\right]\le3$. This along with
Lemma~\ref{l2} and Theorem E shows that $\phi(d)=2$. An
application of Lemma~\ref{l3} then gives $d=6$.
\end{proof}

Obviously, Theorem \ref{t4} remains true if the points $\lambda_k$
are replaced by $\lambda_k=\beta\omega^k$, $0\le k\le d-1$, where
$\beta$ is any nonzero complex number.

\section{A deep zero problem}

In this section we use the HRT conjecture to study a certain deep
zero problem for the Fock space. We begin with some preliminary
results about composition operators on the Fock space.

Given an integer $d >1$, we  again let $\omega = e^{2\pi i/d}$.
Consider the function $\varphi(z)=\omega z$. Obviously, $\varphi$
is just a rotation of the complex plane about the origin, so it induces 
a unitary composition operator $C_{\varphi}$ on the Fock space,
$$C_{\varphi}f(z) = f\circ\varphi(z)=f(\omega z),\qquad f\in F^2.$$

It is clear that any function
$$f(z)=\sum_{j=0}^{\infty}a_{j}z^{j}$$
in $F^2$ admits a unique decomposition
$$f(z)=\sum_{k=0}^{d-1}f_{k}(z),$$
where
$$f_{k}(z)=\sum_{j=0}^{\infty}a_{k+jd}z^{k+jd},\qquad 0\le k\le d-1.$$
For $0\le k\le d-1$ we define the projection operator $P_k: F^2\to F^2$ by 
$P_{k}f=f_{k}$. It is clear that the decomposition of $f$ above is orthogonal, 
that is, we actually have
$$I=\bigoplus_{k=0}^{d-1}P_k,$$
where $I$ is the identity operator on $F^2$. 

\begin{lemma}\label{l6}
With the notation above, we have $C_{\varphi}P_k=\omega^kP_k$ and
$$P_{k}f(z)=\frac1d\sum_{m=0}^{d-1}\omega^{-km}f(\omega^mz)
=\frac1d\sum_{m=0}^{d-1}\omega^{-km}C_{\varphi}^mf(z)$$
for $0\le k\le d-1$ and $f\in F^2$.
\end{lemma}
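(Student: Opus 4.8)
The plan is to treat $C_{\varphi}$ as being diagonalized by the projections $P_k$, and then to recover each $P_k$ from the powers $C_{\varphi}^m$ by discrete Fourier inversion; the engine behind the inversion is the orthogonality relation $\sum_{m=0}^{d-1}\omega^{(l-k)m}=d\,\delta_{kl}$ for the $d$-th roots of unity.

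First I would prove the eigenvalue relation $C_{\varphi}P_k=\omega^kP_k$ by a termwise computation. Writing $P_kf=f_k$ with $f_k(z)=\sum_{j=0}^{\infty}a_{k+jd}z^{k+jd}$ and applying $C_{\varphi}$, each monomial $z^{k+jd}$ is sent to $(\omega z)^{k+jd}=\omega^{k+jd}z^{k+jd}$. Since $\omega^d=1$, the factor $\omega^{k+jd}=\omega^k(\omega^d)^j=\omega^k$ is independent of $j$, so $C_{\varphi}f_k=\omega^kf_k$, that is, $C_{\varphi}P_k=\omega^kP_k$. Iterating this gives $C_{\varphi}^mP_k=\omega^{km}P_k$ for every $m\in\N_0$.

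Next I would invert the system. Using the orthogonal decomposition $f=\sum_{l=0}^{d-1}f_l$ together with $C_{\varphi}^mP_l=\omega^{lm}P_l$, we obtain $C_{\varphi}^mf=\sum_{l=0}^{d-1}\omega^{lm}f_l$, which of course also equals $f(\omega^mz)$. Forming the relevant combination and interchanging the two finite sums yields
$$\frac1d\sum_{m=0}^{d-1}\omega^{-km}C_{\varphi}^mf=\frac1d\sum_{l=0}^{d-1}f_l\sum_{m=0}^{d-1}\omega^{(l-k)m}.$$
The inner sum is a finite geometric series, equal to $d$ when $l=k$ and to $(1-\omega^{(l-k)d})/(1-\omega^{l-k})=0$ when $l\neq k$, because $\omega^{l-k}\neq1$ while $\omega^{(l-k)d}=1$. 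Hence only the $l=k$ term survives and the right-hand side collapses to $f_k=P_kf$. Substituting $C_{\varphi}^mf(z)=f(\omega^mz)$ then produces both of the displayed expressions.

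There is no serious obstacle here: the whole statement is simply a discrete Fourier inversion on the cyclic group $\Z/d\Z$. The only point deserving care is confirming that $l-k$ is genuinely nonzero modulo $d$ (so that the geometric series vanishes) precisely when $l\neq k$, which holds because both indices range over $\{0,1,\dots,d-1\}$.
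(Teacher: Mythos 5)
Your proof is correct and follows essentially the same route as the paper's: a termwise Taylor-series computation for $C_{\varphi}P_k=\omega^kP_k$, followed by discrete Fourier inversion via the geometric-series identity $\sum_{m=0}^{d-1}\omega^{(l-k)m}=d\,\delta_{kl}$. Your explicit remark that $l-k\not\equiv 0 \pmod d$ for $l\neq k$ in $\{0,\dots,d-1\}$ is a welcome bit of care that the paper leaves implicit.
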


\begin{proof}
For any $f(z)=\sum_{n=0}^\infty a_nz^n$ in $F^2$ we have
\begin{align*}
C_\varphi P_kf(z)&=C_\varphi f_k(z)
=\sum_{j=0}^\infty a_{k+dj}(\omega z)^{k+dj}\\
&=\sum_{j=0}^\infty a_{k+dj}\omega^{k+dj}z^{k+dj}
=\omega^k\sum_{j=0}^\infty a_{k+dj}z^{k+dj}\\
&=\omega^kf_k(z)=\omega^kP_kf(z).
\end{align*}
Thus $C_\varphi P_k=\omega^k P_k$. It then follows that
\begin{align*}
\sum_{m=0}^{d-1}\omega^{-km}f(\omega^mz)&=
\sum_{m=0}^{d-1}\omega^{-km}C^m_\varphi f(z)
=\sum_{m=0}^{d-1}\omega^{-km}\sum_{n=0}^{d-1}
C^m_\varphi P_nf(z)\\
&=\sum_{m=0}^{d-1}\omega^{-km}\sum_{n=0}^{d-1}
\omega^{mn}P_nf(z)
=\sum_{n=0}^{d-1}P_nf(z)\sum_{m=0}^{d-1}(\omega^{n-k})^m\\
&=dP_kf(z)+\sum_{n=0, n\not=k}^{d-1}\frac{1-\omega^{d(n-k)}}
{1-\omega^{n-k}}P_nf(z)\\
&=dP_kf(z).
\end{align*}
This completes the proof of the lemma.
\end{proof}

The following lemma establishes the commutation relation between 
the Weyl operator $U_{\alpha}$ and the composition operator 
$C_{\varphi}$.

\begin{lemma}\label{l5}
Let $d>1$, $\omega=e^{2\pi i/d}$, $\varphi(z)=\omega z$, and 
$\beta\in\C$. Then
\begin{align*} 
C_{\varphi}U_{\beta}f=U_{\beta\omega^{-1}}C_{\varphi}f,\quad f\in F^2.
\end{align*}
\end{lemma}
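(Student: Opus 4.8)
The plan is to verify the identity by direct computation: apply both composite operators to an arbitrary $f\in F^2$, evaluate at an arbitrary point $z\in\C$, and compare the two resulting entire functions. Since $C_\varphi$, $U_\beta$, and $U_{\beta\omega^{-1}}$ are all bounded (indeed unitary) on $F^2$, it suffices to establish the pointwise equality $C_\varphi U_\beta f(z)=U_{\beta\omega^{-1}}C_\varphi f(z)$ for every $z$; two entire functions that agree at every point coincide as elements of $F^2$.

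First I would unwind the left-hand side. By the definition of the Weyl operator, $U_\beta f(z)=e^{-\frac12|\beta|^2-\overline{\beta}z}f(z+\beta)$, and precomposing with the rotation $\varphi(z)=\omega z$ gives $C_\varphi U_\beta f(z)=(U_\beta f)(\omega z)=e^{-\frac12|\beta|^2-\overline{\beta}\omega z}f(\omega z+\beta)$. Next I would unwind the right-hand side. Here $C_\varphi f(z)=f(\omega z)$, so applying $U_{\beta\omega^{-1}}$ produces the Gaussian prefactor $e^{-\frac12|\beta\omega^{-1}|^2-\overline{\beta\omega^{-1}}\,z}$ multiplied by $(C_\varphi f)(z+\beta\omega^{-1})=f\bigl(\omega(z+\beta\omega^{-1})\bigr)=f(\omega z+\beta)$. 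The decisive observation is that $|\omega|=1$, whence $|\beta\omega^{-1}|^2=|\beta|^2$ and $\overline{\omega^{-1}}=\omega$, so that $\overline{\beta\omega^{-1}}=\overline{\beta}\,\omega$. With these two simplifications the right-hand side collapses to $e^{-\frac12|\beta|^2-\overline{\beta}\omega z}f(\omega z+\beta)$, which is exactly the expression obtained for the left-hand side.

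There is essentially no conceptual obstacle here; the computation is routine once the definitions are substituted. The only steps that require care are tracking the complex conjugates in the Gaussian prefactors and using $|\omega|=1$ to replace $\overline{\omega^{-1}}$ by $\omega$. The argument of $f$ matches automatically, since the inner rotation $\varphi$ absorbs the shift $\beta\omega^{-1}$ back into $\beta$. Thus the identity $C_\varphi U_\beta f=U_{\beta\omega^{-1}}C_\varphi f$ follows directly.
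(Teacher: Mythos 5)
Your computation is correct and matches the paper's own proof, which likewise verifies the identity by substituting the definitions and using $|\omega|=1$ together with $\overline{\beta\omega^{-1}}=\overline{\beta}\,\omega$ to reconcile the Gaussian prefactors. The only cosmetic difference is that the paper transforms the left-hand side step by step into the right-hand side, whereas you expand both sides and compare; the content is identical.
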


\begin{proof}
For any $f\in F^2$ we have
\begin{align*}
(C_{\varphi}U_{\beta}f)(z)
&=e^{-\frac{1}{2}|\beta|^2-\bar{\beta}\omega z}f(\omega z+\beta) \\
&=e^{-\frac{1}{2}|\beta|^2-\bar{\beta}\omega z} 
f(\omega(z+\omega^{-1}\beta)) \\
&=e^{-\frac{1}{2}|\beta|^2-\bar{\beta} \omega z} 
(C_{\varphi}f)(z+\omega^{-1} \beta) \\
&=e^{-\frac{1}{2}|\beta\omega^{-1}|^2-\overline{\beta\omega^{-1}}z}
(C_{\varphi}f)(z+\omega^{-1} \beta) \\
&=(U_{\beta\omega^{-1}}C_{\varphi}f)(z).
\end{align*}
This completes the proof.
\end{proof}

The main result of this section is the following. As a consequence of it, 
we will obtain Hedenmalm's Theorem A as a special case, answer 
Hedenmalm's Problem B affirmatively, and derive a couple of additional 
new results.

\begin{theorem}\label{t6}
Let $d>1$ be an integer, $\beta_k=\beta\in\C$ for $k=1,\ldots,d-1$, and
$\E_k=\{k+dj: j\in\N_0\}$ for $0\le k\le d-1$. If Conjecture C$^\prime$ 
holds for $N=d$ and $\lambda_k=-\beta\omega^{-k+1}$, $0\le k\le d-1$,
where $\omega=e^{2\pi i/d}$, then the deep zero problem
\begin{equation}\label{eq2}
\begin{cases}
f^{(j)}(0)=0, &j\in\E_0,\\
(U_{\beta_1}f)^{(j)}(0)=0, &j\in\E_1,\\
\vdots & \vdots\\
(U_{\beta_{d-1}}f)^{(j)}(0)=0, &j\in\E_{d-1},\end{cases}
\end{equation}
for $f\in F^2$ has an affirmative answer, that is, the condition 
above implies $f=0$. 
\end{theorem}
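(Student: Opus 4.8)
The plan is to translate the deep zero conditions in \eqref{eq2} into operator identities and then extract a single linear dependence relation among Weyl translates of one fixed nonzero function, at which point Conjecture~C$^\prime$ finishes the argument. First I would reinterpret each row of \eqref{eq2} through the Taylor expansion. Since $\E_k=\{k+dj:j\in\N_0\}$, the vanishing of $f^{(j)}(0)$ for all $j\in\E_0$ is precisely the statement $P_0f=0$, and likewise $(U_\beta f)^{(j)}(0)=0$ for all $j\in\E_k$ amounts to $P_k(U_\beta f)=0$ for $1\le k\le d-1$. Summing these latter $d-1$ conditions and using $I=\bigoplus_{k=0}^{d-1}P_k$ gives $(I-P_0)(U_\beta f)=0$, so that $U_\beta f=P_0(U_\beta f)$. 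By Lemma~\ref{l6} we have $C_\varphi P_0=P_0$, hence $C_\varphi(U_\beta f)=C_\varphi P_0(U_\beta f)=P_0(U_\beta f)=U_\beta f$, and therefore $C_\varphi^m(U_\beta f)=U_\beta f$ for every $m\in\N_0$.

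Next I would set $g=U_\beta f$, so that $f=U_{-\beta}g$ and, by what was just shown, $C_\varphi^m g=g$ for all $m$. The purpose of this substitution is that the one surviving condition $P_0f=0$ can be transferred entirely onto $g$. Iterating the commutation relation of Lemma~\ref{l5} (applied with $-\beta$ in place of $\beta$) yields $C_\varphi^m U_{-\beta}=U_{-\beta\omega^{-m}}C_\varphi^m$, whence
\[
C_\varphi^m f=C_\varphi^m U_{-\beta}g=U_{-\beta\omega^{-m}}C_\varphi^m g=U_{-\beta\omega^{-m}}g,\qquad 0\le m\le d-1.
\]
Feeding these into the $k=0$ case of Lemma~\ref{l6}, the equation $P_0f=0$ becomes
\[
0=d\,P_0f=\sum_{m=0}^{d-1}C_\varphi^m f=\sum_{m=0}^{d-1}U_{-\beta\omega^{-m}}g.
\]

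Finally I would invoke the hypothesis. As $m$ runs over $0,\ldots,d-1$ the exponents $-m$ exhaust the residues modulo $d$, so the points $-\beta\omega^{-m}$ are $d$ distinct complex numbers and coincide, after relabeling, with the set $\{\lambda_k=-\beta\omega^{-k+1}:0\le k\le d-1\}$ appearing in the statement. The displayed identity is thus a linear combination of $U_{\lambda_0}g,\ldots,U_{\lambda_{d-1}}g$ with all coefficients equal to $1$, hence nontrivial. If $g\ne0$, then Conjecture~C$^\prime$, assumed to hold for exactly these points with $N=d$, would force $U_{\lambda_0}g,\ldots,U_{\lambda_{d-1}}g$ to be linearly independent, contradicting the vanishing combination. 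Therefore $g=0$, and since $U_\beta$ is unitary, $f=U_{-\beta}g=0$.

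I expect the only genuinely delicate step to be the second paragraph: electing to apply the conjecture to $g=U_\beta f$ rather than to $f$ itself is precisely what makes the translation parameters line up with the prescribed $\lambda_k$, and it requires iterating the spectral relation $C_\varphi P_0=P_0$ together with the commutation relation of Lemma~\ref{l5} so that the composition operators $C_\varphi^m$ are converted into honest Weyl translates of a single nonzero vector. Everything else is the routine dictionary between Taylor coefficients, the projections $P_k$, and the associated functional equations.
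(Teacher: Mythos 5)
Your proof is correct and follows essentially the same route as the paper's: both reduce the hypotheses to $P_0(U_{-\beta}h)=0$ and $h=P_0h$ for $h=U_\beta f$, then combine Lemma~\ref{l6} (the averaging formula for $P_0$) with the iterated commutation relation $C_\varphi^m U_{-\beta}=U_{-\beta\omega^{-m}}C_\varphi^m$ from Lemma~\ref{l5} to produce the vanishing sum $\sum_{m=0}^{d-1}U_{-\beta\omega^{-m}}h=0$, and conclude via Conjecture~C$^\prime$. The only cosmetic difference is that you transfer the computation onto $f=U_{-\beta}g$ before projecting while the paper applies $P_0$ directly to $U_{-\beta}h_0$, which is the same calculation in a different order.
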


\begin{proof}
Let $h=U_{\beta}f$. The conditions in (\ref{eq2}) are the same as
$$\begin{cases}
(U_{-\beta}h)^{(j)}(0)=(U_{\beta}^{-1}h)^{(j)}(0)=0, &j\in\E_{0},\\[8pt]
h^{(j)}(0)=0,  &j\in\E_k, 1\le k\le d-1,
\end{cases}$$
which imply that $P_{0}U_{-\beta}h = 0$ and $h_{k}=P_{k}h=0$ for
$1\le k \le d-1$. Write $h=\sum_{k=0}^{d-1}h_{k}$ and apply 
Lemmas~\ref{l6} and \ref{l5}. We obtain
\begin{align*}
0&=P_{0}U_{-\beta}h =P_{0}U_{-\beta}h_{0}\\ 
&=\frac{1}{d}\sum_{m=0}^{d-1}C_{\varphi}^{m}U_{-\beta}h_{0}
=\frac{1}{d}\sum_{m=0}^{d-1}U_{-\beta\omega^{-m}}
C_{\varphi}^{m}h_{0}\\
&=\frac{1}{d}\sum_{m=0}^{d-1}U_{-\beta\omega^{-m}}h_{0}
=\frac{1}{d}\sum_{m=0}^{d-1}U_{-\beta\omega^{-m}}h.
\end{align*}
If Conjecture C$^\prime$ holds for $N=d$ and the points $\lambda_k
=-\beta\omega^{-k}$, $0\le k\le N-1$, then we must have $h=0$,
and so $f=0$. This proves the desired result.
\end{proof}

The following result is now a consequence of Theorem~\ref{t1}, 
Theorem D, and Theorem \ref{t6}. The case $d=2$ yields 
Theorem A, the case $d=4$ gives an affirmative answer to 
Problem B, and the cases $d=3$ and $d=6$ are something 
that has not been considered before.

\begin{corollary}\label{c7}
For $2\le d\leq 4$ or $d=6$ let $\beta_{0}=0$ and 
$\beta_1=\cdots=\beta_{d-1}=\beta\in\C$. If $f\in F^2$ satisfies 
\begin{align*}
(U_{\beta_j} f)^{(j)}(0) = 0, ~~\textnormal{for each}~j 
\in \mathcal{E}_{k}~\textnormal{and}~k = 0,\ldots,d-1, 
\end{align*}
then $f=0$.
\end{corollary}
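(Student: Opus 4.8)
The plan is to deduce Corollary~\ref{c7} directly from Theorem~\ref{t6} by verifying, for each admissible value of $d$, that the hypothesis of Theorem~\ref{t6} (namely that Conjecture~C$^\prime$ holds for $N=d$ and the points $\lambda_k=-\beta\omega^{-k+1}$) is actually satisfied. Since Theorem~\ref{t6} already reduces the deep zero problem to an instance of the Fock space HRT conjecture, the only work remaining is to confirm that this particular instance is one of the cases known to be true.

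First I would set up the correspondence between the statement of Corollary~\ref{c7} and the framework of Theorem~\ref{t6}. With $\beta_0=0$ and $\beta_1=\cdots=\beta_{d-1}=\beta$, the hypothesis that $(U_{\beta_j}f)^{(j)}(0)=0$ for every $j\in\E_k$ (reading $\beta_j$ as the weight attached to the block $\E_k$ containing $j$) is exactly the system~(\ref{eq2}), since $U_{\beta_0}=U_0=I$ handles the $\E_0$ block and $U_\beta$ handles the remaining blocks. Thus Corollary~\ref{c7} follows the moment we know that Conjecture~C$^\prime$ holds for $N=d$ at the points $\lambda_k=-\beta\omega^{-k+1}$, $0\le k\le d-1$. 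Up to relabeling the index and factoring out the nonzero constant $-\beta$ (which does not affect linear independence), these are the same $d$ points as $\beta\omega^k$, $0\le k\le d-1$, namely a scaled set of $d$-th roots of unity.

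Next I would invoke Theorem~\ref{t1}, which asserts precisely that the Fock space HRT conjecture holds for the points $\beta e^{2k\pi i/d}$ when $d\in\{1,2,3,4,6\}$. Since these are exactly the points appearing in the hypothesis of Theorem~\ref{t6} (after the harmless reindexing and scaling noted above), and since Corollary~\ref{c7} ranges over $2\le d\le 4$ and $d=6$, which all lie in $\{1,2,3,4,6\}$, the hypothesis of Theorem~\ref{t6} is verified in every case. Applying Theorem~\ref{t6} then yields $f=0$, as desired. For concreteness one could also trace through the chain of references: $d=2,3$ use part (a) or (c) of Theorem~D, $d=4$ uses part (b) or (c), and $d=6$ is the lattice argument carried out in the proof of Theorem~\ref{t1}.

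The main thing to get right — and really the only nontrivial bookkeeping — is the precise alignment between the points $\lambda_k=-\beta\omega^{-k+1}$ named in Theorem~\ref{t6} and the points $\beta e^{2k\pi i/d}$ for which Theorem~\ref{t1} guarantees linear independence. One must check that as $k$ runs over $0,\dots,d-1$ the exponents $-k+1$ run over a complete set of residues modulo $d$, so that $\{\omega^{-k+1}\}$ is the full set of $d$-th roots of unity, and that the overall nonzero scalar $-\beta$ preserves linear independence. This is routine but is where an off-by-one or sign error would creep in; everything else is a direct citation. I expect no genuine obstacle, since Theorem~\ref{t6} was formulated exactly to make this deduction immediate.
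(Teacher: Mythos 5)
Your route is the paper's route --- reduce to Theorem~\ref{t6} and verify its HRT hypothesis via Theorem~\ref{t1} --- and your bookkeeping for $\beta\neq0$ is correct: as $k$ runs over $0,\dots,d-1$ the exponents $-k+1$ form a complete residue system modulo $d$, so $\{-\beta\omega^{-k+1}\}=\{(-\beta)\omega^{k}\}$ is exactly the configuration covered by Theorem~\ref{t1} with $\beta$ replaced by $-\beta$. But there is one genuine gap: the corollary allows $\beta\in\C$, including $\beta=0$, and your parenthetical ``factoring out the nonzero constant $-\beta$'' silently assumes otherwise. At $\beta=0$ the argument does not merely lack a citation, it breaks: the $d$ points $\lambda_k=-\beta\omega^{-k+1}$ all collapse to $0$, so they are not distinct, Conjecture~C$^\prime$ does not apply (indeed $U_0f,\dots,U_0f$ are trivially linearly dependent), and Theorem~\ref{t1} explicitly requires $\beta\in\C\setminus\{0\}$. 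So the hypothesis of Theorem~\ref{t6} cannot be satisfied in that case and your deduction is vacuous there.

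The repair is one line, and it is exactly how the paper opens its proof: if $\beta=0$ then every $U_{\beta_j}$ is the identity, so the hypotheses say $f^{(j)}(0)=0$ for all $j\in\E_0\cup\cdots\cup\E_{d-1}=\N_0$, i.e., all Taylor coefficients of $f$ at the origin vanish and $f=0$. With that case split added, your argument for $\beta\neq0$ coincides with the paper's proof (which cites Theorem~\ref{t1}, part (c) of Theorem~D, and Theorem~\ref{t6}), including your correct observation that the reindexing $k\mapsto-k+1$ modulo $d$ and the overall nonzero scalar are harmless for linear independence.
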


\begin{proof}
If $\beta=0$, then the assumptions imply that all Taylor coefficients of $f$ 
at the origin are $0$, so $f=0$. The case $\beta\neq 0$ follows from
Theorem~\ref{t1}, part (c) of Theorem D, and Theorem~\ref{t6}.  
\end{proof}

We conclude the paper with the following problem, a natural special case
of the HRT conjecture.

\begin{problem}\label{p8}
For any positive integer $d$ let $\lambda_k=e^{2k\pi i/d}$, $0\le k\le d-1$.
Is the HRT conjecture true for the points in $\{\lambda_k\}$, that is, if
$f\in F^2\setminus\{0\}$, are the functions in $\{U_{\lambda_k}f\}$ 
linearly independent?
\end{problem}

Although the full HRT conjecture remains open and seems to be a very
hard problem, we feel that this special case may not be that difficult. 
Its solution together with our Theorem~\ref{t6} will then yield a nice
result about deep zero problems. By the previous section, 
Problem~\ref{p8} for the case $d\in\{1,2,3,4,6\}$ has an affirmative 
answer, so the interesting remaining cases are $d=5$ and $d>6$, 
that is when the roots of unity no longer lie in a regular lattice but 
we suspect that the HRT conjecture may still be true.


\begin{thebibliography}{99}

\bibitem{Abel1839}
N. H. Abel, Sur les fonctions g\'{e}n\'{e}ratrices et leurs d\'{e}terminants, 
in {\sl{\OE}uvres compl\`{e}tes,}
Gr{\o}ndahl \& Son, Christiania, 1839, pp. 77--88.

\bibitem{DZ2012}
C. Demeter, and A. Zaharescu,
Proof of the HRT conjecture for (2,2) configurations,
{\sl J. Math. Anal. Appl.} \textbf{388} (2012), no. 1, 151--159.

\bibitem{MAE1954} M. A. Evgrafov, 
{\sl The Abel-Gon\v{c}arov interpolation problem,}
Gosudarstv. Izdat. Tehn.-Teor. Lit., Moscow, 1954, 126 pp.

\bibitem{AI1947}
A. O. Gel'fond and I. I. Ibragimov, 
On functions whose derivatives are zero at two points,
{\sl Izv. Akad. Nauk SSSR Ser. Mat.} \textbf{11} (1947), 547--560.


\bibitem{Hed2025}
H. Hedenmalm, Deep zero problems,
{\sl J. Anal. Math.} \textbf{156} (2025), no. 1, 83--95.

\bibitem{Heil2007}
C. Heil, History and evolution of the density theorem for Gabor frames,
{\sl J. Fourier Anal. Appl.} \textbf{13} (2007), 113-166.

\bibitem{HRT1996}
C. Heil, J. Ramanathan, and P. Topiwala, 
Linear independence of time-frequency translates,
{\sl Proc. Amer. Math. Soc.} \textbf{124} (1996), no. 9, 2787--2795.

\bibitem{JAK1968} Yu. A. Kaz'min,
Certain forms of power series expansions of entire functions of exponential type,
{\sl Vestnik Moskov. Univ. Ser. I Mat. Meh.} \textbf{23} (1968), no. 1, 49--63.


\bibitem{L2002}
S. Lang, {\sl Algebra} (revised 3rd edition), Springer, New York, 2002. 

\bibitem{LDF2022}
D. Li,  Linear independence of a finite set of time-frequency shifts,
{\sl Front. Math. China} \textbf{17} (2022), no. 4, 501--509.

\bibitem{LPA1999}
P. A. Linnell, 
von Neumann algebras and linear independence of translates,
{\sl Proc. Amer. Math. Soc.} \textbf{127} (1999), no. 11, 3269--3277.

\bibitem{MR2023}
A. Montes-Rodr\'{i}guez,
Zeros under unitary weighted composition operators in the Hardy and 
Bergman spaces, {\sl Mediterr. J. Math.} \textbf{20} (2023), no. 2, 
Paper No. 82, 9 pp.

\bibitem{SDW2015}
D. W. Stroock, Remarks on the HRT conjecture,
{\sl Lecture Notes in Mathematics} \textbf{2137}, 
Springer, Cham, 2015, 603--617.

\bibitem{JMW1964} J. M. Whittaker, 
{\sl Interpolatory function theory},
Stechert-Hafner, Inc., New York, 1964, v+107 pp.

\bibitem{Zhu2012}
K. Zhu,  {\sl Analysis on Fock spaces}, Springer, New York, 2012, x+344 pp.

\bibitem{Zhu2019} 
K. Zhu, Towards a dictionary for the Bargmann transform,
in {\sl Handbook of Analytic Operator Theory}, Chapman \& Hall/CRC, 2019,
319--349.

\end{thebibliography}
\end{document}